\newtheorem{theorem}{Theorem}[section]
\newtheorem{remark}{Remark}[section]
\newtheorem{lemma}[theorem]{Lemma}
\newcommand{\n}{\rho}
\renewcommand{\div}{ {\rm div }  }
\newcommand{\bt}{\begin{theorem}}
\newcommand{\bl}{\begin{lemma}}
\newcommand{\el}{\end{lemma}}
\newcommand{\et}{\end{theorem}}
\newcommand{\ga}{\gamma}
\newcommand{\la}{\label}
\newcommand{\bn}{\begin{eqnarray}}
\newcommand{\en}{\end{eqnarray}}
\newcommand{\bnn}{\begin{eqnarray*}}
\newcommand{\enn}{\end{eqnarray*}}
\newcommand{\bnnn}{\begin{eqnarray*}}
\newcommand{\ennn}{\end{eqnarray*}}
\newcommand{\ben}{\begin{enumerate}}
\newcommand{\een}{\end{enumerate}}
\newcommand{\ba}{\begin{aligned}}
\newcommand{\ea}{\end{aligned}}
\newcommand{\be}{\begin{equation}}
\newcommand{\ee}{\end{equation}}
\def\O{\mathbb{R}^N}
\def\p{\partial}
\def\norm[#1]#2{\|#2\|_{#1}}
\def\lap{\triangle}
\def\lam{\lambda}
\title[Spherically symmetric flows]
{ Existence and uniqueness of weak solutions of
the compressible spherically symmetric Navier-Stokes equations}
\date{}
\author{Xiangdi H{\small UANG}}
\begin{document}
\maketitle

\begin{abstract}
One of the most influential fundamental tools in harmonic analysis is Riesz transform. It maps $L^p$ functions to $L^p$ functions for any $p\in (1,\infty)$ which plays an important role in singular operators. As an application in fluid dynamics, the norm equivalence between $\|\nabla u\|_{L^p}$ and $\|\mbox{div} u\|_{L^p}+\|\mbox{curl} u\|_{L^p}$ is well established for $p\in (1,\infty)$. However, since Riesz operators sent bounded functions only to BMO functions, there is no hope to bound $\|\nabla u\|_{L^\infty}$ in terms of $\|\mbox{div} u\|_{L^\infty}+\|\mbox{curl} u\|_{L^\infty}$. As pointed out by Hoff[{\it SIAM J. Math. Anal.} {\bf 37}(2006), No. 6, 1742-1760], this is the main obstacle to obtain uniqueness of weak solutions for isentropic compressible flows.

   Fortunately, based on new observations, see Lemma \ref{Riesz}, we derive an exact estimate for $\|\nabla u\|_{L^\infty}\le (2+1/N)\|\mbox{div} u\|_{L^\infty}$ for any N-dimensional radially symmetric vector functions $u$. As a direct application, we give an affirmative answer to the open problem of uniqueness of some weak solutions to the compressible spherically symmetric flows in a bounded ball.
\end{abstract}
\footnote[0]{2010 Mathematics Subject Classification. 35Q30, 76N10}


\section{Introduction and main results}

We are concerned with the isentropic system of compressible Navier-Stokes equations which reads as
\be\la{n1}
\begin{cases} \rho_t + \div(\rho U) = 0,\\
 (\rho U)_t + \div(\rho U\otimes U) + \nabla P = \mu\lap U + (\mu + \lam)\nabla(\div U),
\end{cases}
\ee
where  $t\ge 0, x\in\Omega\subset\O(N=2,3),\ \rho=\n(t,x)$ and $U=U(t,x)$ are
the density and fluid velocity respectively, and $P=P(\rho)$ is the pressure given by a state equation
\be\la{n2}
P(\rho) = a\rho^{\gamma}
\ee
with the adiabatic constant $\ga>1$ and a positive constant $a$.
The shear viscosity   $\mu$ and the bulk one $\lambda$ are constants satisfying  the physical hypothesis
\be\la{n3}
\mu>0 ,\quad \mu+\frac{N}{2}\lam\ge 0.
\ee
The domain $\Omega$ is a bounded ball with a radius R, namely,
\be
\Omega =B_R=\{x\in\O;\ |x| \le R<\infty\}.
\ee

We study an initial boundary value problem for (1.1) with the
initial condition
\be
(\rho,U)(0,x) =(\rho_0,U_0)(x),\quad x \in \Omega,
\ee
and the boundary condition
\be
U(t,x)=0, \quad t\ge 0,\ x \in \partial \Omega,
\ee
and we are looking for the smooth spherically symmetric solution $(\rho,U)$
of the problem (1.1), (1.5),(1.6) which enjoys the form
\be
\rho(t,x)=\rho(t,|x|),\quad U(t,x)=u(t,|x|)\frac{x}{|x|}.
\ee
Then, for the initial data to be consistent with the form (1.7), we
assume the initial data $(\rho_0,U_0)$ also takes the form
\be\la{bc-3}
\rho_0=\rho_0(|x|),\quad U_0=u_0(|x|)\frac{x}{|x|}.
\ee
In this paper, we further assume the initial density is uniformly positive, that is,
\be\la{bc-3-1}
\rho_0=\rho_0(|x|) \ge \underline{\rho} >0, \quad x \in \Omega
\ee
for a positive constant $\underline{\rho}$.
Then it is noted that as long as the classical solution
of (1.1),(1.5),(1.6) exists the density $\rho$ is positive, that is,
the vacuum never occurs.
It is also noted that since the assumption (1.7) implies
\be
U(t,x) + U(t,-x)=0,\quad x \in \Omega,
\ee
we necessarily have $U(t,0)=0$ (also $U_0(0)=0$).

There are many results about the existence of local and global strong solutions
in time
of the isentropic system of compressible Navier-Stokes equations
when the initial density is uniformly positive (refer to
\cite{Be, Itaya, Ka-1, Ka-2 , Nash, Salvi, Solo, Valli-1, Valli-2}
and their generalization\cite{Mat-1, Mat-2, Mat-3, Tani}
to the full system including the conservation law of energy). On the other hand,
for the initial density allowing vacuum,
the local well-posedness of strong solutions
of the isentropic system was established by Kim\cite{Kim-1}.
For strong solutions with spatial symmetries,
the authors in \cite{Kim-2} proved the global existence
of radially symmetric strong solutions of the isentropic system
in an annular domain, even allowing vacuum initially.
However, it still remains open whether there exist global strong solutions
which are spherically symmetric in a ball. The main difficulties lie on
the lack of estimates of the density and velocity near the center.
In the case vacuum appears, it is worth noting that
Xin\cite{Xin} established a blow-up result which shows
that if the initial density has a compact support, then
any smooth solution to the Cauchy problem of the full system of
compressible Navier-Stokes equations without heat conduction blows up
in a finite time. The same blowup phenomenon occurs also for the isentropic system.
Indeed,  Zhang-Fang (\cite{ZF},Theorem 1.8) showed
that if $(\rho, U) \in C^1([0,T];H^k)\,(k > 3)$
is a spherically symmetric solution to the Cauchy problem with the compact supported initial density,
then the upper limit of $T$ must be finite.
On the other hand, it's unclear whether the strong (classical) solutions
lose their regularity in a finite time
when the initial density is uniformly away from vacuum.

On the other hand, there are amount of literatures investigating the global existence of weak solutions to the compressible Navier-Stokes equations, such as "finite energy solutions" proposed and developed by Lions\cite{L1}, Hoff\cite{Hof1} and Feireisl\cite{F1}, etc. One remarkable result is due to Jiang-Zhang\cite{JZ}, where they prove a global existence for three-dimensional compressible spherically symmetric flows with $\gamma>1$. However, whether their weak solution is unique remains a long-standing open problem. Meanwhile, Desjardins\cite{Des} built a more regular weak solution for three-dimensional torus. Inspired by his work and a new observation in Lemma \ref{Riesz}, we will give some positive answer to the spherically symmetric flow in this paper.

In the spherical coordinates, the original system (\ref{n1}) under the assumption (1.7) takes the form
\be\label{sym}
\left\{
\ba
& \rho_t + (\rho u)_r + (N-1)\frac{\rho u}{r} = 0,\\
& (\rho u)_t + \left(\rho u^2+P(\rho)\right)_r + (N-1)\frac{\rho u^2}{r}
= \kappa\left(u_r+(N-1)\frac{u}{r}\right)_r
\ea
\right.
\ee
where
$
\kappa = 2\mu+\lam.
$
Now, we consider  the following Lagrangian transformation:
\be
t=t,\quad y=\int_0^r\rho(t,s)\, s^{N-1}ds.
\ee
Then, it follows from (1.10) that
\be\la{vip-p}
y_t = -\rho u r^{N-1},\quad r_t = u, \quad r_y = (\rho r^{N-1})^{-1},
\ee
and  the system (\ref{sym}) can be further reduced to
\be\label{sys}
\left\{
\ba
& \rho_t + \rho^2(r^{N-1}u)_y = 0,\\
& r^{1-N}u_t + P_y = \kappa\left(\rho(r^{N-1}u)_y\right)_y
\ea
\right.
\ee
where $t \ge 0$, $y \in [0, M_0]$ and $M_0$ is defined by
\be\la{mass}
M_0=\int_0^R\rho_0(r)\,r^{N-1}dr=\int_0^R\rho(t,r)\,r^{N-1}dr,
\ee
according to the conservation of mass.
Note that
\be
r(t,0)=0,\quad r(t,M_0)=R.
\ee
We denote by $E_0$ the initial energy
\be
E_0 = \int_0^R\left(\rho_0 \frac{u_0^2}{2} + \frac{a\rho_0^{\ga}}{\ga-1}\right)r^{N-1}dr,
\ee
and define a cuboid $Q_{t,y}$, for $t \ge 0$ and $y \in [0, M_0]$, as
\be
Q_{t,y} =[0,t]\times [y,M_0].
\ee
From now on, we denote for $p\in [1,\infty)$ and a radially symmetric function $f=f(|x|)$
\be
\|f\|_{L^p(\Omega)} = (\int_\Omega f^pdx)^{\frac{1}{p}}=\omega_{N}\left(\int_0^Rf^pr^{N-1}dr\right)^{\frac{1}{p}}
\ee
where
\be
\omega_{N}=N|B_1|,\quad\mbox{$|B_1|$ stands for the volume of N-dimensional unit ball.}
\ee

We first prove a local existence of weak solutions to the compressible Navier-Stokes equations in Theorem \ref{t2}.

\noindent
The initial data are supposed to satisfy (\ref{bc-3}-\ref{bc-3-1}) and
\be
\left\{
\ba
& \rho_0\in L^\infty(\Omega)\\
& U_0\in H^1(\Omega)^N.
\ea
\right.
\ee

\begin{theorem}\la{t2}
  Assume $\Omega=B_R$ is a bounded ball in $\mathbb{R}^N$ for $N=2,3$ and $\gamma>1$, then there exists $T_0\in (0,+\infty]$ and a weak solution $(\rho,U)$ to the Navier-Stokes equations (\ref{n1}) in $(0,T_0)$ such that for all $T<T_0$,
  \be\la{regular}
  \left\{
  \ba
  & \rho\in L^\infty((0,T)\times\Omega),\\
  & \rho\dot{U}=\rho(U_t+U\cdot\nabla U),\nabla\dot{U}\in L^2((0,T)\times\Omega)^N,\\
  & div U\in L^2(0,T;L^\infty(\Omega)),\\
  & \nabla U\in L^\infty(0,T;L^2(\Omega))^N\cap L^2(0,T;L^\infty(\Omega))
  \ea
  \right.
  \ee
\end{theorem}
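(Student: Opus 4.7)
The plan is to construct the weak solution by an approximation argument: first regularize the initial data to be smooth and bounded away from vacuum, invoke the classical local existence theory for the resulting smooth problem, and then extract a subsequence that converges to a weak solution by passing to the limit on a family of $\delta$-independent a priori estimates. Specifically, I would mollify $U_0$ and $\rho_0$ to obtain spherically symmetric $(\rho_0^\delta, U_0^\delta) \in C^\infty$ with $\rho_0^\delta \ge \underline{\rho}/2$ and with norms bounded uniformly by those of the original data, and then appeal to the classical results of Solonnikov, Itaya and Valli to obtain spherically symmetric classical solutions $(\rho^\delta, U^\delta)$ on some interval $[0, T^\delta]$ that remain smooth and uniformly positive for as long as they exist.

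The core of the argument is a family of a priori bounds on a time interval $[0, T_0]$ independent of $\delta$. The basic energy identity controls $\rho |U|^2 + \rho^{\ga}$ in $L^\infty_t L^1_x$. Multiplying the momentum equation by $\dot U = U_t + U \cdot \nabla U$ and integrating by parts in the manner of Hoff yields $\nabla U \in L^\infty_t L^2_x$ and $\sqrt{\rho}\, \dot U \in L^2_{t,x}$, conditioned on a provisional bound for $\sup_{[0,t]} \|\rho\|_{L^\infty}$. A further time differentiation of the momentum equation, tested again against $\dot U$, delivers $\nabla \dot U \in L^2_{t,x}$. At this point the effective viscous flux $F = \kappa\,\divg U - P(\rho)$ satisfies the elliptic relation $\lap F = \divg(\rho \dot U)$ on the ball, so elliptic regularity combined with the embedding $H^1\hookrightarrow L^q$ for some $q>N$ gives $F$, and therefore $\divg U$, bounded in $L^2_t L^\infty_x$.

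Lemma \ref{Riesz} now enters decisively: the spherically symmetric estimate $\|\nabla U\|_{L^\infty} \le (2 + 1/N)\|\divg U\|_{L^\infty}$ upgrades the $L^2_t L^\infty_x$ control of $\divg U$ to the same control for $\nabla U$, which in turn closes the density bound. Indeed, the transport form of the continuity equation yields $\|\rho(t)\|_{L^\infty} \le \|\rho_0\|_{L^\infty} \exp \int_0^t \|\divg U\|_{L^\infty}\, ds$, and Gronwall combined with a short-time smallness argument selects $T_0 > 0$ so that the provisional ceiling on $\|\rho\|_{L^\infty}$ is maintained uniformly in $\delta$. With these bounds in hand, the Aubin--Lions lemma produces strong convergence $\rho^\delta \to \rho$ in $C([0,T]; L^p)$ and weak-$\ast$ convergence of $\nabla U^\delta$ in $L^\infty_t L^2_x \cap L^2_t L^\infty_x$; identifying the limits in the nonlinear terms (the most delicate being $P(\rho^\delta) \to P(\rho)$, which follows from the strong density convergence) produces the desired weak solution.

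The principal obstacle is precisely the coupling between the density bound and the $L^\infty$ bound for $\nabla U$: without Lemma \ref{Riesz} one has at best $\divg U \in L^\infty$, which in general is insufficient to propagate the characteristic field $\partial_t + U \cdot \nabla$ and hence insufficient to close the $L^\infty$ ceiling on $\rho$. Radial symmetry annihilates exactly this obstruction and is what makes the whole bootstrap go through. A secondary, more technical, difficulty is the coordinate singularity at $r=0$; the Lagrangian mass coordinate introduced in (\ref{vip-p})--(\ref{sys}) is well adapted to this and is convenient for justifying the elliptic regularity step for $F$ rigorously up to the center of the ball.
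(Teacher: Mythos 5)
Your proposal is a coherent alternative but is genuinely different from the paper's argument, and it misses the paper's central technical device. The paper never estimates $\|\rho(t)\|_{L^\infty}$ through the Eulerian transport bound $\|\rho(t)\|_{L^\infty}\le\|\rho_0\|_{L^\infty}\exp\int_0^t\|\divg U\|_{L^\infty}$, because that would require controlling $\divg U$ in $L^1_t L^\infty_x$ \emph{before} the density bound is in hand, i.e.\ it requires pushing the full Hoff ladder ($\sqrt{\rho}\,\dot U\in L^2$, then $\nabla\dot U\in L^2$, then elliptic bounds for the effective flux) through the bootstrap. Instead, the paper passes to the Lagrangian mass coordinate, integrates the identity $\kappa(\log\rho)_{ty}=-(r^{1-N}u)_t-P_y-(N-1)u^2/r^N$, and obtains an explicit representation $\rho(t,y)=\rho_0(y)\rho_0(0)^{-1}\rho(t,0)\,\Psi_1\Psi_2\Psi_3$. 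The factors $\Psi_1$, $\Psi_3$ are estimated using the $L^p$ pointwise part of Lemma \ref{Riesz} (not the $L^\infty$ part) together with the Caffarelli--Kohn--Nirenberg inequality with weights, and only first-level information ($\|\nabla U\|_{L^2}$, $\|\sqrt{\rho}\dot U\|_{L^2}$, $\|\nabla G\|_{L^2}=\|\rho\dot U\|_{L^2}$) is needed. The troublesome center value $\rho(t,0)$ and the factor $\Psi_2=\exp(\kappa^{-1}\int_0^t(p(s,0)-p(s,y))\,ds)$ are then handled by an algebraic $\mathcal P$/$\mathcal U$ factorization together with the conservation of volume $\int_0^{M_0}dy/\rho=R^N/N$ and Gronwall's inequality, not by any elliptic or maximum-principle argument. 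In short, the paper closes the $L^\infty$ bound on $\rho$ at energy and first-Hoff level, and only afterwards derives $\divg U\in L^2_tL^\infty_x$ and invokes the $L^\infty$ part of Lemma \ref{Riesz} to obtain $\nabla U\in L^2_tL^\infty_x$; your route needs the second-level Hoff estimate and the elliptic bound on $G$ already inside the density bootstrap.

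Two further remarks. First, the paper explicitly flags that Desjardins's original torus argument relies on commutator estimates unavailable on a bounded ball, and presents the Lagrangian/CKN machinery precisely as a substitute for them; your proposal does not address this point, and the Eulerian elliptic step $\lap F=\divg(\rho\dot U)$ needs care near $r=0$ and with boundary conditions, which the paper sidesteps by using the scalar identity $\nabla G=\rho\dot U$ in the radial setting. Second, your statement that without Lemma \ref{Riesz} one ``cannot propagate the characteristic field'' mislocates the role of the lemma: the transport bound on $\|\rho\|_{L^\infty}$ requires only $\divg U\in L^1_tL^\infty_x$, not $\nabla U\in L^\infty$. In the paper, the $L^\infty$ estimate $\|\nabla U\|_{L^\infty}\le(2+1/N)\|\divg U\|_{L^\infty}$ is used to obtain the final regularity entry $\nabla U\in L^2_tL^\infty_x$ and, more importantly, is the engine of the uniqueness proof of Theorem \ref{t3}, not of the density bound in Theorem \ref{t2}.
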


\begin{remark}
  The key idea to establish local existence of weak solution with regularity (\ref{regular}) is to derive uniform upper bound of the density, which  is analogous to Desjardins\cite{Des}. However, there are two obstacles in bounded domain. First of all, due to the lack of commutator estimates, whether there is local weak solution with higher regularity (\ref{regular}) for system (\ref{n1}) remains unknown for initial boundary value problem. We rewrite it in spherically coordinate and derive some new estimates to play a critical role as commutator estimates frequently used by \cite{Des,L1} for Cauchy problem and torus. On the other hand, general global finite energy weak solution with $\gamma>1$ was proved by Jiang-Zhang\cite{JZ}. However, their weak solution is much less regular than (\ref{regular}) and leave a challenging problem on uniqueness of such weak solutions. The main value in Theorem \ref{t2} is to weak the assumption from $\gamma>3$ in \cite{Des} to $\gamma>1$ for three-dimensional spherically symmetric flow, and further more, give an affirmative answer on uniqueness of weak solutions stemming from regularity class (\ref{regular}), which is our main issue in Theorem \ref{t3}. The technical part lies on the combination of Caffarelli-Kohn-Nirenberg\cite{CKN} inequalities with weights and pointwise estimates for radially symmetric functions, see Lemmas \ref{CKN}-\ref{Riesz}.
\end{remark}

\begin{remark}
  From many early works on the blowup criterion \cite{Hxd-1,Hxd-2,Hxd-3,SZ} of strong solutions to the compressible Navier-Stokes equations, the uniform bound of the density induces the regularity of $\rho\dot{U}$, $\nabla\dot{U}$, $div U$ and $curl U$, as indicated by the first three lines in Theorem \ref{t2}. Besides, one of the most important observations is Lemma \ref{Riesz}, which gives desired bound for $\|\nabla U\|_{L^\infty}$ in terms of $\|div U\|_{L^\infty}$. This is a key ingredient in proving the uniqueness of weak solutions illustrated in the following theorem.
\end{remark}

\begin{theorem}\la{t3}
  Let $(\rho^i,U^i)$ be two weak solutions of (1) obtained by Theorem \ref{t2}. Then
  \be
  \rho^1=\rho^2,\quad U^1=U^2\quad \mbox{a.e on $(0,T)\times\Omega$}.
  \ee
\end{theorem}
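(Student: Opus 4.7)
The plan is a standard difference argument between two weak solutions sharing the regularity (\ref{regular}). Set $\sigma=\rho^1-\rho^2$ and $W=U^1-U^2$. Subtracting the two continuity equations, and the two momentum equations after grouping the material derivative with respect to $U^1$ on the left of the latter, gives
\[
\pa_t\sigma + U^1\cdot\na\sigma + \sigma\,\divg U^1 + \divg(\rho^2 W) = 0,
\]
\[
\rho^1(\pa_t W + U^1\cdot\na W) - \mu\lap W - (\mu+\lm)\na\divg W = -\sigma\,\dot U^2 - \rho^1 W\cdot\na U^2 - \na\bigl(P(\rho^1)-P(\rho^2)\bigr).
\]
The goal is a Gronwall inequality $\Phi'(t)\le F(t)\Phi(t)$ with $F\in L^1(0,T)$ for
\[
\Phi(t):=\|\sigma(t)\|_{L^2(\om)}^2+\int_\om\rho^1(t,x)|W(t,x)|^2\,dx,
\]
from which $\Phi(0)=0$ forces $\sigma\equiv 0$ and $W\equiv 0$.

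The density piece comes from $L^2$ pairing with $\sigma$: the convective terms combine, through integration by parts, to $\tfrac12\|\divg U^1\|_{L^\infty}\|\sigma\|_{L^2}^2$, controlled in $L^1_t$ by (\ref{regular}) (with Lemma \ref{Riesz} translating $\|\divg U^1\|_{L^\infty}$-type bounds into $\|\na U^1\|_{L^\infty}$ where needed); the coupling $\int\rho^2\sigma\,\divg W\le C\|\sigma\|_{L^2}\|\divg W\|_{L^2}$ is reserved for absorption into the viscous dissipation of $W$; and the term containing $\na\rho^2$ is handled using the $L^2(0,T;L^2)$-bound on $\na P(\rho^2)$ read off from the momentum equation (\ref{n1}) together with (\ref{regular}) and the uniform positivity $\rho^2\ge\underline\rho$. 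The velocity piece comes from testing the momentum difference against $W$ and using $W|_{\pa\om}=0$:
\[
\tfrac12\tfrac{d}{dt}\!\int_\om\rho^1|W|^2 + \mu\|\na W\|_{L^2}^2 + (\mu+\lm)\|\divg W\|_{L^2}^2 = I_1+I_2+I_3,
\]
with $I_1=\int(P(\rho^1)-P(\rho^2))\divg W\le C\|\sigma\|_{L^2}\|\divg W\|_{L^2}$ (from $\rho^i\in L^\infty$), $I_2=-\int\rho^1(W\cdot\na U^2)\cdot W\le\|\na U^2\|_{L^\infty}\int\rho^1|W|^2$ (with $L^1_t$-coefficient by Lemma \ref{Riesz}), and the delicate coupling $I_3=-\int\sigma\,\dot U^2\cdot W$.

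The main obstacle is $I_3$, which pairs the density difference with the acceleration of the second solution and cannot be closed by the crude bounds used for $I_1$ and $I_2$. The resolution exploits $\na\dot U^2\in L^2((0,T)\times\om)$ from (\ref{regular}) together with the Dirichlet vanishing of $\dot U^2$ on $\pa\om$; Sobolev embedding then upgrades this to $\dot U^2\in L^2(0,T;L^6(\om))$ when $N=3$ (and $L^q$, $q<\infty$, when $N=2$). Interpolating $W\in H^1_0(\om)$ via $\|W\|_{L^3}\le C\|W\|_{L^2}^{1/2}\|\na W\|_{L^2}^{1/2}$ and applying H\"older and Young's inequality yields
\[
|I_3|\le \eta\|\na W\|_{L^2}^2 + C_\eta\|\dot U^2\|_{L^6}^2\,\Phi(t),
\]
where the uniform positivity $\rho^1\ge\underline\rho$ lets $\|W\|_{L^2}^2$ be identified with $C\int\rho^1|W|^2$. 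Choosing $\eta$ small absorbs the dissipation, and summing the density and velocity estimates gives
\[
\Phi'(t)\le C\bigl(1+\|\divg U^1\|_{L^\infty}+\|\na U^2\|_{L^\infty}+\|\dot U^2\|_{L^6}^2\bigr)\Phi(t),
\]
whose coefficient lies in $L^1(0,T)$ by (\ref{regular}) together with Lemma \ref{Riesz}. Gronwall then closes the argument.
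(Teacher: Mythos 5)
Your proposal runs the uniqueness argument in Eulerian coordinates, whereas the paper passes to the Lagrangian mass coordinate for the one-dimensional radial system (\ref{sys-s}). That difference is not cosmetic: in the paper's formulation the density (more precisely the specific volume) difference satisfies $\Lambda_t=(r^{N-1}\Theta)_y$, which contains no spatial derivative of $\rho$ or $v$, so the $L^2$ energy estimate for $\Lambda$ closes immediately against the viscous dissipation of $\Theta$. In your Eulerian formulation, by contrast, pairing the continuity difference with $\sigma$ inevitably produces the term
\[
\int_\om \sigma\, W\cdot\na\rho^2\,dx,
\]
and this is exactly where your argument has a genuine gap. You propose to ``read off'' an $L^2(0,T;L^2)$ bound on $\na P(\rho^2)$ from the momentum equation and (\ref{regular}). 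However, for radial fields the momentum equation gives $\rho\dot U=\na G$ with $G=\kappa\,\divg U-P$, so $\na G\in L^2$, but to isolate $\na P=\kappa\na\divg U-\na G$ one would also need a bound on $\na\divg U$, i.e.\ on second derivatives of $U$. The regularity class (\ref{regular}) only provides $\divg U\in L^2(0,T;L^\infty)$ and $\na U\in L^\infty(0,T;L^2)\cap L^2(0,T;L^\infty)$; it controls neither $D^2U$ nor $\na\rho$. Thus the claimed $L^2$ bound on $\na P(\rho^2)$ (equivalently on $\na\rho^2$) does not follow, and the density estimate does not close as stated. This is precisely the difficulty that either (a) the paper's Lagrangian reformulation removes by construction, or (b) Hoff's negative-norm ($H^{-1}$) approach in \cite{Hof2} circumvents by measuring the density difference dually rather than in $L^2$; your proposal does neither.

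The rest of the momentum side is essentially sound. Your identification of $I_3=-\int\sigma\,\dot U^2\cdot W$ as the delicate coupling is right, and the bound $\dot U^2\in L^2(0,T;L^6)$ is legitimately available: $\rho\dot U^2\in L^2$, the density is bounded below and above, $\na\dot U^2\in L^2$, and $\dot U^2|_{\pa\om}=0$, so $\dot U^2\in L^2(0,T;H^1_0)\hookrightarrow L^2(0,T;L^6)$ in dimension three. (A cleaner route than your interpolation on $W$ is the H\"older split $|I_3|\le\|\sigma\|_{L^{3/2}}\|\dot U^2\|_{L^6}\|W\|_{L^6}$ followed by Young.) But none of this rescues the density estimate, so as written the proof does not reach the conclusion. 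To repair it you would either have to rework the continuity-equation estimate in a negative norm as in Hoff \cite{Hof2}, or adopt the paper's Lagrangian variables where the problematic $\na\rho$ term simply does not arise.
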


\begin{remark}
  There are many weak-strong uniqueness results\cite{Ger,Hof2,F2} concerning compressible Navier-Stokes equations. However, weak-weak uniqueness incorporates more difficulties and need special attention. Up to now, the most far reaching result appears in \cite{Hof2}. As pointed out by Hoff\cite{Hof2}, the main obstacle to prevent us from establishing uniqueness is whether we can prove $\nabla U\in L^1L^\infty$ instead of $\nabla U\in L^1BMO$. Such a fact was first verified by Hoff\cite{Hof3}, where the Lipschitz regularity of the velocity $U$ was established with piecewise $C^\alpha$ density. This is a only result concerning uniqueness of weak solutions. With the help of Lemma \ref{Riesz} and Theorem \ref{t2}, we also give an affirmative answer to the spherically symmetric case.

\end{remark}

\section{Proof of Theorem \ref{t2}}

First we recall the following famous Caffarelli-Kohn-Nirenberg\cite{CKN} inequalities with weights.
\begin{lemma}[Caffarelli-Kohn-Nirenberg]\la{CKN}
  There exists a positive constant such that the following inequality holds for all $u\in C_0^\infty(R^n)$
  \be\la{CKN-1}
  \big||x|^{\gamma}u\big|_{L^r}\le C\big||x|^{\alpha}Du\big|_{L^p}^a
  \big||x|^{\beta}u\big|_{L^q}^{1-a}
  \ee
  if and only if the following relations hold:
  \be
  \frac{1}{r}+\frac{\gamma}{n}=a\left(\frac{1}{p}+
  \frac{\alpha-1}{n}\right)+(1-a)\left(\frac{1}{q}+
  \frac{\beta}{n}\right)
  \ee
  \be
  0\le \alpha-\sigma\quad \mbox{if} \ a>0,
  \ee
  and
  \be
  \alpha-\sigma\le 1\quad\mbox{if}\ a>0\quad \mbox{and}\
  \frac{1}{p}+\frac{\alpha-1}{n}=\frac{1}{r}+\frac{\gamma}{n}.
  \ee
  satisfying
  \be
  \ba\la{nnn-1}
  & p,q\ge 1,\ r>0,\ 0\le a\le 1\\
  & \frac{1}{p}+\frac{\alpha-1}{n},\ \frac{1}{q}+\frac{\beta}{n},\ \frac{1}{r}+\frac{\gamma}{n}>0,\\
  &\gamma = a\sigma + (1-a)\beta
  \ea
  \ee
  Furthermore, on any compact set in which (\ref{nnn-1}) and $0\le \alpha-\sigma\le 1$ hold, the constant $C$ is bounded.
\end{lemma}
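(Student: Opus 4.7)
The plan is to follow the original argument of Caffarelli, Kohn and Nirenberg: first check that the scaling identity (2.2) is forced by dilation invariance, then reduce the weighted interpolation to the standard unweighted Gagliardo--Nirenberg--Sobolev inequality on dyadic annuli, and finally sum the local estimates.

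For necessity of (2.2), I would substitute $u_\lambda(x)=u(\lambda x)$ into (\ref{CKN-1}). Every weighted norm appearing there rescales as an explicit power of $\lambda$, so requiring the inequality to hold uniformly in $\lambda>0$ forces the exponents to satisfy precisely (2.2). The side condition $\gamma=a\sigma+(1-a)\beta$ is interpreted as an interpolation identity between the weights associated with the gradient term (weight exponent $\sigma$ after the scaling absorbed) and the pure $L^q$ term.

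For the main estimate, decompose $\mathbb{R}^n$ into dyadic annuli $A_k=\{2^k\le|x|<2^{k+1}\}$, $k\in\mathbb{Z}$. On each $A_k$ the weight $|x|^\theta$ is comparable to $2^{k\theta}$, so weighted norms on $A_k$ reduce to unweighted ones up to a factor of $2^{k\theta}$. Rescaling each $A_k$ to a reference annulus $\{1\le|x|<2\}$ and applying the ordinary (unweighted) Gagliardo--Nirenberg--Sobolev interpolation there gives, with a constant $C$ independent of $k$,
\[
\|u\|_{L^r(A_k)}\le C\,\|Du\|_{L^p(\widetilde A_k)}^{a}\,\|u\|_{L^q(\widetilde A_k)}^{1-a},
\]
where $\widetilde A_k\supset A_k$ is a slightly enlarged annulus chosen to absorb the tails of $Du$. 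Multiplying by $2^{k\gamma}$, using the weight comparisons, and invoking $\gamma=a\sigma+(1-a)\beta$ together with (2.2) makes the net power of $2^k$ cancel, yielding the \emph{local} weighted inequality
\[
\bigl\||x|^\gamma u\bigr\|_{L^r(A_k)}\le C\,\bigl\||x|^\alpha Du\bigr\|_{L^p(\widetilde A_k)}^{a}\,\bigl\||x|^\beta u\bigr\|_{L^q(\widetilde A_k)}^{1-a}.
\]

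To conclude I would sum over $k$ using H\"older's inequality on $\ell^s$ sequences, exploiting the identity $ar/p+(1-a)r/q=1$ forced by (2.2) and $\gamma=a\sigma+(1-a)\beta$, together with the finite-overlap property of the enlargements $\widetilde A_k$ which lets the dyadic sum return a global norm on $\mathbb{R}^n$. The hard point is exactly the convergence of this dyadic sum: near $|x|=0$ the hypothesis $0\le\alpha-\sigma$ is precisely what prevents the gradient weight from being too singular, and in the borderline case where the two scaling lines in (2.2) coincide the condition $\alpha-\sigma\le 1$ is what controls the behavior at infinity; these are exactly (2.3)--(2.4). Uniformity of the constant on compact parameter sets, claimed at the end of the lemma, then follows from the continuous dependence of the unweighted Gagliardo--Nirenberg constant and of all the geometric prefactors on the exponents.
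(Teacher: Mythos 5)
You should first note that the paper does not prove this lemma at all: Lemma \ref{CKN} is quoted verbatim as a classical result and the proof is delegated to the cited reference of Caffarelli--Kohn--Nirenberg, so there is no internal argument to compare with; your sketch is an attempt to reconstruct the original 1984 proof, and in outline (scaling for necessity of the dimensional balance, dyadic annuli, unweighted interpolation on each piece, summation) it does follow that classical route. However, as it stands the sketch has concrete gaps that would make it fail as a proof. The central one is the per-annulus estimate $\|u\|_{L^r(A_k)}\le C\|Du\|_{L^p(\widetilde A_k)}^{a}\|u\|_{L^q(\widetilde A_k)}^{1-a}$: for $a>0$ this inequality is simply false on an annulus, since $u\in C_0^\infty(\mathbb{R}^n)$ need not vanish there and a function that is constant on $\widetilde A_k$ makes the right-hand side zero while the left-hand side is positive. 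The correct local statement necessarily carries a lower-order term, e.g. $\|u\|_{L^r(A_k)}\le C\bigl(\|Du\|_{L^p(A_k)}+\|u\|_{L^q(A_k)}\bigr)^{a}\|u\|_{L^q(A_k)}^{1-a}$, and after multiplying by the weight $2^{k\gamma}$ the extra term appears with a mismatched power of $2^{k}$; absorbing it in the dyadic sum is exactly where the hypotheses $0\le\alpha-\sigma\le1$ and the delicate case analysis of the original paper enter, and in the borderline case $\frac1p+\frac{\alpha-1}{n}=\frac1r+\frac{\gamma}{n}$ the naive summation genuinely breaks down. Relatedly, the identity $\frac{ar}{p}+\frac{(1-a)r}{q}=1$ that you claim is ``forced by (2.2)'' is false in general: already for the unweighted Gagliardo--Nirenberg inequality it would force $a=0$. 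What the discrete H\"older (or the elementary lemma $\sum x_k^{\theta_1}y_k^{\theta_2}\le(\sum x_k)^{\theta_1}(\sum y_k)^{\theta_2}$) actually requires is $\frac{ar}{p}+\frac{(1-a)r}{q}\ge1$, and verifying this from (2.2) and $\gamma=a\sigma+(1-a)\beta$ is precisely where $\alpha-\sigma\le1$ is used; that computation is missing.

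The second gap is the ``only if'' direction. Scaling $u_\lambda(x)=u(\lambda x)$ does give necessity of the balance condition (2.2), but the lemma asserts necessity of (2.3) and (2.4) as well, and for those a scaling argument is not enough: one must exhibit explicit families of test functions (concentrating at the origin, or spreading to infinity along the borderline scaling line) whose two sides violate the inequality when $\alpha-\sigma<0$ or when $\alpha-\sigma>1$ in the borderline case. Your sketch only offers the heuristic that these conditions ``prevent the weight from being too singular,'' which is an interpretation, not a proof. Finally, the uniformity of the constant on compact parameter sets does not follow merely from ``continuous dependence'' of the Gagliardo--Nirenberg constant; it requires tracking that every constant in the decomposition, absorption and summation steps can be chosen uniformly as the exponents vary over a compact set satisfying (\ref{nnn-1}) and $0\le\alpha-\sigma\le1$, which again is part of the careful bookkeeping in the original paper. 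Since this lemma is used here only as a cited black box, the economical course is to do what the paper does and invoke the reference, or else to fill in the absorption of the lower-order terms, the borderline case, and the counterexample families explicitly.
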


The following lemma is essential in proving Theorems \ref{t2} and \ref{t3}.

\begin{lemma}\la{Riesz}
Assume $\Omega$ is either a bounded ball $B_R$ with radius $R$ or the whole space $\mathbb{R}^N$, then for any N-dimensional radially symmetric vector functions $U(x)=u(|x|)\frac{x}{|x|}$ for $x\in\Omega$, we have the following estimates
\begin{enumerate}
\item We have
\be
\frac{1}{N}\|div U\|_{L^\infty(\Omega)}\le \|\nabla U\|_{L^\infty(\Omega)}\le (2+\frac{1}{N})\|div U\|_{L^\infty(\Omega)}.
\ee
\item For $p\in [1,\infty)$, we have
\be\la{ur}
\left\{
\ba
&|\frac{u}{r}|\le (\frac{1}{N})^{1-\frac{1}{p}}r^{-\frac{N}{p}}
\omega_{N}^{-\frac{1}{p}}\|div U\|_{L^p(\Omega)}\\
&|u_r|\le \left(1+(N-1)(\frac{1}{N})^{1-\frac{1}{p}}r^{-\frac{N}{p}}
\omega_{N}^{-\frac{1}{p}}\right)\|div U\|_{L^p(\Omega)}
\ea
\right.
\ee
\end{enumerate}
where
\be
\omega_{N,r}=N|B_1|.
\ee
\end{lemma}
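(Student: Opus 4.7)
The plan is to reduce every estimate to a single pointwise representation of $u$ in terms of $\div U$. The key observation is the divergence identity
\[
(r^{N-1}u)_r=r^{N-1}\bigl(u_r+(N-1)u/r\bigr)=r^{N-1}\,\div U,
\]
which, integrated from the origin (the lower boundary term vanishes since $r^{N-1}u(r)\to 0$ as $r\to 0^+$, forced by the radial ansatz $(1.7)$ together with $U(0)=0$), yields the formulas
\[
u(r)=\frac{1}{r^{N-1}}\int_0^r s^{N-1}\,\div U(s)\,ds,\qquad \frac{u(r)}{r}=\frac{1}{r^{N}}\int_0^r s^{N-1}\,\div U(s)\,ds.
\]
A direct differentiation of $U=u(r)\,x/|x|$ also gives $\na U=(u_r-u/r)\,\hat x\otimes\hat x+(u/r)\,I$ with $\hat x=x/|x|$, so the operator norm of $\na U$ at each point equals $\max\{|u_r|,|u/r|\}$ and $\div U=u_r+(N-1)u/r$.

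For part $(1)$, the lower bound follows from the pointwise inequality $|\div U|\le|u_r|+(N-1)|u/r|\le N\|\na U\|_{L^\infty}$. For the upper bound, taking absolute values in the representation of $u/r$ gives $|u/r|\le r^{-N}\|\div U\|_{L^\infty}\int_0^r s^{N-1}\,ds=\tfrac{1}{N}\|\div U\|_{L^\infty}$, and then $|u_r|=|\div U-(N-1)u/r|\le (2-\tfrac{1}{N})\|\div U\|_{L^\infty}$, which lies well within the stated constant $2+1/N$.

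For part $(2)$, I would apply H\"older's inequality to the same integral with conjugate exponents $(p,p')$, factoring the weight as $s^{N-1}=s^{(N-1)/p'}\cdot s^{(N-1)/p}$:
\[
\Bigl|\int_0^r s^{N-1}\,\div U\,ds\Bigr|\le\Bigl(\int_0^r s^{N-1}ds\Bigr)^{1-\frac{1}{p}}\Bigl(\int_0^r s^{N-1}|\div U|^p ds\Bigr)^{\frac{1}{p}}\le\Bigl(\tfrac{r^{N}}{N}\Bigr)^{1-\frac{1}{p}}\omega_N^{-\frac{1}{p}}\|\div U\|_{L^p(\Omega)},
\]
where the last inequality uses the relation $\|\div U\|_{L^p(\Omega)}^p=\omega_N\int_0^R s^{N-1}|\div U(s)|^p\,ds$ for radial functions. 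Dividing by $r^N$ yields the first estimate in $(\ref{ur})$, and the second follows from $u_r=\div U-(N-1)u/r$ together with the bound just obtained on $|u/r|$.

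No step here is genuinely hard: the entire lemma reduces to a one-dimensional integral once the divergence identity is spotted, after which everything is a pointwise $L^\infty$ or weighted H\"older estimate. The only care needed is the vanishing of the boundary term at $r=0$, automatic from the regularity of radial vector fields at the origin, and ensuring that the $L^p$ norm on $\Omega$ is correctly related to the weighted radial integral.
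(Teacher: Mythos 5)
The proposal follows essentially the same route as the paper: both hinge on the identity $(r^{N-1}u)_r=r^{N-1}\div U$, integrate from $r=0$, bound $|u/r|$ pointwise by H\"older, and then express $u_r$ via $\div U-(N-1)u/r$. Three small points distinguish your write-up from the paper's, all in your favor. First, you actually prove the lower bound in part (1), which the paper's proof omits. Second, by observing that $\nabla U=u_r\,\hat{x}\otimes\hat{x}+(u/r)\bigl(I-\hat{x}\otimes\hat{x}\bigr)$ is symmetric with eigenvalues $u_r$ (simple) and $u/r$ (multiplicity $N-1$), so that its operator norm at a point is $\max\{|u_r|,|u/r|\}$, you obtain the sharper constant $2-1/N$ rather than the paper's $2+1/N$, which comes from a cruder entrywise estimate $|\nabla U|\le|u_r|+2|u/r|$. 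Third, you explicitly justify the vanishing of the boundary term at $r=0$, which the paper takes for granted. One caveat that your proposal shares with the paper concerns the second inequality in part (2): writing $|u_r|\le|\div U(r)|+(N-1)|u(r)/r|$ and inserting the $L^p$ bound on $|u/r|$ leaves a genuinely pointwise term $|\div U(r)|$ that cannot, in general, be dominated by $\|\div U\|_{L^p(\Omega)}$; as stated, that line is not justified by the argument given (yours or the paper's) and would need either an $L^\infty$ hypothesis on $\div U$ or a restatement with $|\div U(r)|$ appearing explicitly. Since only the $u/r$ estimate of part (2) is used later in the paper, this is peripheral, but you should not present that deduction as immediate.
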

\begin{proof} Set $r=|x|$, obviously,
\be\la{vip-1}
div U = u_r + (N-1)\frac{u}{r},\quad curl U=0.
\ee
Denote by
\be\la{vip-2}
u_r + (N-1)\frac{u}{r} = F.
\ee
It follows from (\ref{vip-2}) that
\be\la{vip-3}
(r^{N-1}u)_r = r^{N-1}F,
\ee
which gives
\be
r^{N-1}u = \int_0^rs^{N-1}Fds \le \|F\|_{L^\infty}(\int_0^rs^{N-1}ds)=\frac{r^N}{N}\|F\|_{L^\infty}.
\ee
Consequently,
\be
|\frac{u}{r}|\le \frac{1}{N}\|F\|_{L^\infty}.
\ee
And
\be
\|u_r\|_{L^\infty}\le \|F\|_{L^\infty}+(N-1)\|\frac{u}{r}\|_{L^\infty}
\le \frac{2N-1}{N}\|F\|_{L^\infty}.
\ee
On the other hand, as $r_{x_i}=\frac{x_i}{r}$, we have
\be
\ba
\p_{x_i}(u\frac{x_j}{r})& = u_r\frac{x_ix_j}{r^2} + u(\frac{x_j}{r})_{x_i}\\
& = \frac{x_ix_j}{r^2}u_r + (\delta_{ij}-\frac{x_ix_j}{r^2})\frac{u}{r}.
\ea
\ee
It immediately implies
\be
\|\nabla U\|_{L^\infty(\Omega)}\le \|u_r\|_{L^\infty(\Omega)} + 2\|\frac{u}{r}\|_{L^\infty(\Omega)}\le (2+\frac{1}{N})\|F\|_{L^\infty(\Omega)}.
\ee

For $p\in (1,\infty), r\in(0,\infty)$, one obtains
\be
\ba
r^{N-1}u & = \int_0^rs^{N-1}Fds \\
& \le\left(\int_0^r F^ps^{N-1}ds\right)^{\frac{1}{p}}
\left(\int_0^rs^{N-1}ds\right)^{1-\frac{1}{p}}\\
&\le(\frac{r^N}{N})^{1-\frac{1}{p}}\omega_{N}^{-\frac{1}{p}}
\|F\|_{L^p(\Omega)}
\ea
\ee
Hence,
\be\la{uu-1}
|u|\le (\frac{1}{N})^{1-\frac{1}{p}}r^{1-\frac{N}{p}}
\omega_{N}^{-\frac{1}{p}}\|F\|_{L^p(\Omega)}
\ee
Therefore, (\ref{ur}) follows from (\ref{uu-1}) and (\ref{vip-1}).

This finishes the proof of Lemma.
\end{proof}

We only prove the case when $N=3$  since the case $N=2$ is even simpler.
Throughout of this section, we assume that
$(\rho,U)$ with the form {\rm (1.7)} is the solution
to the initial boundary value problem {\rm (1.1),(1.5),(1.6)} in $[0,T]\times\Omega$,
 and we denote by $C$ generic positive
constants only depending on the initial data and time $T$.

We give a sketch of proof of Theorem \ref{t2}, since the main idea can be borrowed from Desjardins\cite{Des}.
Denote
\be\la{phi}
\Phi(t) = 1+\|\rho\|_{L^\infty(\Omega)} + \|P\|_{L^2(\Omega)}^2 + \|\nabla U\|_{L^2(\Omega)}^2.
\ee
Our main procedure is to derive the following estimate for $t<1$,
\be\la{main-m}
\Phi\le C+C\exp\left(C\exp(\int_0^t\chi(\Phi)\lambda(s)ds)\right).
\ee
for some positive increasing smooth funtion $\chi(x)$ and integrable function $\lambda(s)$.
The existence of $T_0<1$ follows immediately from (\ref{main-m}).

Say denoting by $\zeta(t)$ the right hand side of (\ref{main-m}), we conclude that
\be
\frac{d}{dt}\zeta(t)\le C\pi(\zeta(t))\lambda(t),
\ee
for some smooth function $\pi$. Thus, we have
\be
\int_0^{\zeta(t)}\frac{du}{\pi(u)}\le \int_0^t\lambda(s)ds,
\ee
so that there exists $T_0$ such that for all $T<T_0<1$,
\be
\Phi\le C_T.
\ee

We first have the following basic energy estimate.
Since the proof is standard, we omit it.
\begin{lemma}\la{energy}
It holds for any $0\le t\le T$,
\be\la{en-1}
\int_{\Omega}\left(\rho \frac{|U|^2}{2}+\frac{a\rho^\gamma}{\gamma-1}\right)dx
+ \kappa\int_0^t\int_{\Omega}|\nabla U|^2\,dxd\tau\le \int_{\Omega}\left(\rho_0 \frac{|U_0|^2}{2}
+\frac{a\rho_0^\gamma}{\gamma-1}\right)dx,
\ee
or equivalently,
\be\la{en-2}
\ba
& \int_0^R\left(\rho \frac{u^2}{2}+\frac{a\rho^\gamma}{\gamma-1}\right)r^{N-1}dr +
\kappa\int_0^t\int_0^R\left(u_r^2+\frac{u^2}{r^2}\right)r^{N-1}drd\tau\\
& \le \int_0^R\left(\rho_0 \frac{u_0^2}{2}+
\frac{a\rho_0^\gamma}{\gamma-1}\right)r^{N-1}dr  = E_0.
\ea
\ee
\end{lemma}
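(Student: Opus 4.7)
The plan is the classical multiplier argument: test the momentum equation in (\ref{n1}) against $U$, integrate over $\Omega$, and recognize the resulting terms as the time derivative of the total mechanical energy plus a nonnegative viscous dissipation. Since $U=0$ on $\partial\Omega$ (and $U(t,0)=0$ for radial fields by (1.9)), every integration by parts is lawful and no boundary flux is produced.

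First I would handle the convective part. Combining $(\rho U)_t\cdot U$ and $\div(\rho U\otimes U)\cdot U$ with the continuity equation $\rho_t+\div(\rho U)=0$ yields
\begin{equation*}
\bigl[(\rho U)_t + \div(\rho U\otimes U)\bigr]\cdot U = \partial_t\bigl(\tfrac12\rho|U|^2\bigr)+\div\bigl(\tfrac12\rho|U|^2\,U\bigr),
\end{equation*}
so integration in $x$ gives $\frac{d}{dt}\int_\Omega\tfrac12\rho|U|^2\,dx$. For the pressure term I would integrate by parts to obtain $\int_\Omega\nabla P\cdot U\,dx=-\int_\Omega P\,\div U\,dx$, then differentiate the internal-energy density $\frac{a\rho^\gamma}{\gamma-1}$ in time and substitute $\rho_t=-\div(\rho U)$ together with $P=a\rho^\gamma$; a short computation shows $\frac{d}{dt}\int_\Omega\frac{a\rho^\gamma}{\gamma-1}\,dx=-\int_\Omega P\,\div U\,dx$, identifying the pressure contribution with the time derivative of the internal energy.

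For the viscous part, two integrations by parts give
\begin{equation*}
-\int_\Omega\bigl[\mu\Delta U+(\mu+\lambda)\nabla\div U\bigr]\cdot U\,dx=\mu\int_\Omega|\nabla U|^2\,dx+(\mu+\lambda)\int_\Omega(\div U)^2\,dx.
\end{equation*}
Here the radial structure enters decisively: $\curl U\equiv 0$ by (\ref{vip-1}), and the Bochner-type identity $\int_\Omega|\nabla U|^2\,dx=\int_\Omega(\div U)^2\,dx+\int_\Omega|\curl U|^2\,dx$ (valid for $U|_{\partial\Omega}=0$) forces $\int_\Omega|\nabla U|^2\,dx=\int_\Omega(\div U)^2\,dx$. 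The viscous dissipation thus collapses to $\kappa\int_\Omega|\nabla U|^2\,dx$ with $\kappa=2\mu+\lambda$, which is precisely the form appearing in (\ref{en-1}).

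Summing the three contributions and integrating in time from $0$ to $t$ yields the equality form of (\ref{en-1}); relaxing to an inequality is harmless. The equivalent spherical form (\ref{en-2}) follows by writing $dx=\omega_N r^{N-1}\,dr$ and substituting the pointwise expressions $|U|^2=u^2$ and $|\nabla U|^2=u_r^2+(N-1)\,u^2/r^2$ computed in the proof of Lemma \ref{Riesz}. The only delicate point — rather than a genuine obstacle — is justifying these manipulations for a solution of only the regularity (\ref{regular}); one carries out the computation on a smooth approximating sequence and passes to the limit using lower semicontinuity of the $L^2$ dissipation norm and strong compactness of $\sqrt{\rho}U$, which is standard and is precisely why the authors choose to omit the details.
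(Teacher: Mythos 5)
Your proof is correct and is exactly the standard multiplier argument the paper alludes to when it writes ``Since the proof is standard, we omit it''; the identification of the kinetic, internal, and dissipative contributions is accurate, and the step of collapsing $\mu\int|\nabla U|^2+(\mu+\lambda)\int(\div U)^2$ to $\kappa\int|\nabla U|^2$ via $\curl U\equiv 0$ and the boundary-clean Bochner identity is the right way to produce the single $\kappa$-weighted dissipation in (\ref{en-1}).

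One small point worth noting: your substitution $|\nabla U|^2=u_r^2+(N-1)u^2/r^2$ is the correct pointwise formula, so after cancelling the common factor $\omega_N$ the dissipation you actually obtain is $\kappa\int_0^R\bigl(u_r^2+(N-1)\tfrac{u^2}{r^2}\bigr)r^{N-1}\,dr$, which is sharper than the coefficient $1$ printed in (\ref{en-2}). Thus (\ref{en-2}) is not literally ``equivalent'' to (\ref{en-1}) but is a weaker consequence of it (valid since $N-1\ge1$); your derivation proves the sharper inequality and a fortiori the lemma as stated.
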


 Denoted by
 \be
 G=\kappa div U-P
  \ee
  as effective flux. The momentum equations $(1.1)_2$ can be rewritten as
\be
\rho\dot{U} = \nabla G.
\ee

\medskip
The main difficulty lies in the bound of the density. We will work it in spherical coordinate as system (\ref{sys}) as follows.

\begin{lemma}\la{le-2}
There exists a smooth positive increasing function $\chi(x)$ and an integrable function $\lambda(t)$ such that
\be\label{cru}
\rho(t,y)\le C\exp\left(C\beta(t)+C\exp\left(C\beta(t)+
C\int_0^t\chi(\Phi)\lambda(s)ds\right)
\right),\quad (t,y)\in [0,1]\times[0,M_0].
\ee
where
\be
\beta(t) = \|P\|_{L^2(\Omega)}^2 + \|\nabla U\|_{L^2(\Omega)}^2
\ee
\end{lemma}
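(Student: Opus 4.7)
The plan is to start from the continuity equation in Lagrangian coordinates, which along the particle paths $y=\mbox{const}$ reduces to $(\log\rho)_t=-\mbox{div}\,U$. Writing $\kappa\,\mbox{div}\,U=G+P$ with the effective flux $G=\kappa\,\mbox{div}\,U-P$ and integrating in time yields the fundamental representation
\be
\kappa\log\frac{\rho(t,y)}{\rho_0(y)}\;=\;-\int_0^t G(s,y)\,ds\;-\;\int_0^t P(s,y)\,ds.
\ee
Because $P=a\rho^\gamma\ge 0$, the second integral can only help; so the entire task reduces to producing a pointwise upper bound for $-\int_0^t G(s,y)\,ds$ of the form $C\beta(t)+C\exp\bigl(C\beta(t)+C\int_0^t\chi(\Phi)\lambda\bigr)$. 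Exponentiating that bound will give the stated double-exponential estimate for $\rho$.

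To estimate $G$ I would exploit the momentum equation in the form $\rho\dot U=\nabla G$, which for radially symmetric fields reads $\rho\dot u=G_r$ and therefore, after the Lagrangian change of variable $\partial_y=(\rho r^{N-1})^{-1}\partial_r$, becomes $G_y=\dot u/r^{N-1}$. Integrating from $y$ up to the boundary value $M_0$ gives
\be
-G(t,y)\;=\;-G(t,M_0)\;+\;\int_y^{M_0}\frac{u_t(t,y')}{r^{N-1}(t,y')}\,dy'.
\ee
The mean value of $G$ is controlled from the identity $\int_\Omega G\,dx=-\int_\Omega P\,dx$ (integration by parts in the definition of $G$, using $U|_{\partial\Omega}=0$) together with the energy estimate of Lemma \ref{energy}; this, combined with a one-dimensional Poincar\'e type bound involving $\|\sqrt\rho\dot U\|_{L^2}$, handles the boundary term $G(t,M_0)$.

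The main difficulty is the remaining spatial integral, whose integrand carries the singular weight $r^{1-N}$ at the origin. This is where Lemma \ref{Riesz} and the Caffarelli--Kohn--Nirenberg inequalities of Lemma \ref{CKN} play the decisive role: the pointwise bound $|u/r|\le (1/N)^{1-1/p}r^{-N/p}\omega_N^{-1/p}\|\mbox{div}\,U\|_{L^p}$ converts the singular weight into a power $r^{-N/p}$ that is still integrable near $0$ for $p$ chosen large enough, and it does so at the cost of a factor $\|\mbox{div}\,U\|_{L^p}\le C\|\nabla U\|_{L^p}$ which is absorbed into $\chi(\Phi)$. Applying H\"older in time against the higher order estimate $\rho\dot U,\nabla\dot U\in L^2L^2$ available from the regularity class (\ref{regular}) then yields a bound of the shape
\be
\Big|\int_0^t G(s,y)\,ds\Big|\;\le\;C\beta(t)+C\int_0^t\chi(\Phi(s))\,\lambda(s)\,ds
\ee
with $\lambda\in L^1(0,1)$.

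The last step is a two-tier Gr\"onwall argument to close the nonlinear feedback coming from the pressure: the inner estimate for $\int_0^t G$ involves $\|P\|_{L^2}^2$, so plugging back into the representation formula for $\log\rho$ and exponentiating once produces a first exponential controlling $P=a\rho^\gamma$, and a second exponentiation recovers $\rho$ itself. The hardest part will be choosing the weight exponent $p$ and the function $\chi$ so that the self-referential dependence between $\Phi$ and the estimate is mild enough that $\lambda\in L^1$; once this balance is achieved, the double-exponential structure in (\ref{cru}) falls out directly from iterating the two Gr\"onwall applications.
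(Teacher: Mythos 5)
Your plan takes a genuinely different route from the paper, and its critical closing step is missing. The paper does \emph{not} integrate $(\log\rho)_t=-\operatorname{div}U$ directly. It differentiates once more in $y$ (eq.~(\ref{ttt})), integrates from the \emph{center} $y=0$ to $y$, and arrives at a representation $\rho(t,y)=\mathcal P(t)\,\mathcal U(t,y)\exp\bigl(-\kappa^{-1}\int_0^t p(s,y)\,ds\bigr)$ in which the dangerous time-integrated pressure is handled by an explicit Bernoulli-type ODE: eq.~(\ref{p-1}) shows $\exp\bigl(\frac{\gamma}{\kappa}\int_0^t p\,ds\bigr)$ satisfies a linear ODE driven by $(\mathcal P\mathcal U)^\gamma$, giving the closed form (\ref{rhoo-2}). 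The central quantity $\mathcal P(t)$ (which carries $\rho(t,0)$ and the central pressure history) is then pinned down by the conservation-of-volume identity $\int_0^{M_0}\rho^{-1}\,dy=R^N/N$ (eq.~(\ref{my})) and Gronwall (eqs.~(\ref{final})--(\ref{Gron2})). This ODE/volume-identity mechanism is what produces the double exponential in (\ref{cru}): $\mathcal U$ is one exponential, $\mathcal P$ a second. Your proposal contains no analogue of any of this.

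Concretely, two gaps. First, you anchor at the boundary via $-G(t,y)=-G(t,M_0)+\int_y^{M_0}\dot u\,r^{1-N}\,dy'$ and propose to control $G(t,M_0)$ by ``mean value plus Poincar\'e.'' But the mean-value point of $G$ over $B_R$ can fall arbitrarily close to $r=0$, where $\int_{r_0}^R s^{1-N}\,ds$ diverges, so the radial fundamental-theorem-of-calculus estimate you are implicitly relying on fails; an annular-mean argument (using $|u(R/2)|\lesssim\|\nabla U\|_{L^2}$ from Lemma~\ref{Riesz} to localize away from the origin) would be needed and is not stated. Second, and more fundamentally, the ``two-tier Gr\"onwall'' you invoke to reproduce the double exponential does not describe a real step: once you have $\kappa\log(\rho/\rho_0)\le -\int_0^t G\,ds\le(\text{stuff})$ you exponentiate exactly once and get $\rho$, so your sketch can only yield a single-exponential bound, and there is no mechanism in your chain of inequalities that generates the second exponential in (\ref{cru}). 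If your boundary-anchor estimate actually closed it would give a strictly stronger single-exponential bound than the lemma claims, which is itself a red flag that the feedback through $\|P\|_{L^\infty}$, $\|\nabla U\|_{L^4}$, and $\|\sqrt\rho\dot U\|_{L^2}$ inside $\Phi$ has not really been accounted for. The paper's Bernoulli-ODE\,+\,volume-identity device is precisely how that feedback is tamed; replacing it requires a concrete substitute, not an appeal to an unspecified iteration.
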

\begin{remark}
  From the work of \cite{Des}, refer to (103), we can prove
  \be
  \beta(t)\le C\exp\left(C\int_0^t\chi(\Phi)\lambda(s)ds\right).
  \ee
  Therefore, the left work is concentrated on Lemma \ref{le-2}.
\end{remark}

\begin{proof}\ {\it Step 1.}
In view of (\ref{sys}), it holds
\be\la{ttt}
\ba
\kappa(\log\rho)_{ty}& = \kappa(\frac{\rho_t}{\rho})_y = -\kappa(\rho(r^{N-1}u)_y)_y = -r^{1-N}u_t- p_y\\
& = -(r^{1-N}u)_t - p_y -(N-1)\frac{u^2}{r^N}.
\ea
\ee
Thus, integrating (\ref{ttt}) over $(0,t)\times(0,y)$, we deduce that
\be
\ba
\kappa\log\frac{\rho(t,y)}{\rho(t,0)} &=\kappa \log\frac{\rho_0(y)}{\rho_0(0)} +
\int_{0}^y\left((r^{1-N}u)(0,z)-(r^{1-N}u)(t,z)\right)\,dz\\
& + \int_0^t(p(s,0)-p(s,y))\,ds -\int_0^t\int_{0}^y(N-1)\frac{u^2(s,z)}{r^N}\,dzds,
\ea
\ee
which is equivalent to
\be\la{cru-1}
\ba
\frac{\rho(t,y)}{\rho(t,0)} &=
\frac{\rho_0(y)}{\rho_0(0)}\exp\left(\kappa^{-1}\int_{0}^y((r^{1-N}u)(0,z)-(r^{1-N}u)(t,z))\,dz\right)\\
&\cdot\exp\left(\kappa^{-1}\int_0^t(p(s,0)-p(s,y))\,ds\right)\\
&\cdot\exp\left(-\kappa^{-1}\int_0^t\int_{0}^y(N-1)\frac{u^2(s,z)}{r^N}\,dzds\right).\\
& = \frac{\rho_0(y)}{\rho_0(0)}\Pi_{i=1}^3\Psi_i.
\ea
\ee


 We first deal with $\Psi_1$ and $\Psi_3$.

{\it Step 2.} It follows from Lemma \ref{Riesz} with $N=3$, energy equality (\ref{en-2}) and $\gamma>1$ that
\be
\ba
\int_{0}^{y} r^{1-N}|u|\,dy &=\int_{0}^r\rho|u|\,dr
\le C\|\nabla U\|_{L^2(\Omega)}\int_0^r\rho s^{-\frac{1}{2}}ds\\
&\le C\|\nabla U\|_{L^2(\Omega)}\left(\int_0^r\rho^{6\gamma} s^2ds\right)^{\frac{1}{6\gamma}}
\left(\int_0^rs^{-\frac{3\gamma+2}{6\gamma-1}}ds\right)^{1-\frac{1}{6\gamma}}\\
&\le Cr^{\frac{3(\gamma-1)}{6\gamma}}\|P\|_{L^6(\Omega)}^{\frac{1}{\gamma}}\|\nabla U\|_{L^2(\Omega)}\\
&\le C+C\|P\|_{L^6(\Omega)}^2+C\|\nabla U\|_{L^2(\Omega)}^2\\
& \le C+C\beta(t) + C\int_0^t\|P\|_{\infty}^{12}dt,
\ea
\ee
where we used the following fact.

First recall that
\be
(P^6)_t + div(P^6u) + (6\gamma-1)Pdiv u=0,
\ee
one immediately has
\be
\|P\|_{L^6(\Omega)}^6\le \|P_0\|_{L^6(\Omega)}^6+C\int_0^t\int_{\Omega}|P^6div u|dxds\\
\le C + C\int_0^t\|P\|_{\infty}^{12}ds.
\ee

Hence,
\be\la{psi-1}
\Psi_1,\Psi_1^{-1}\le C\exp\left(C\beta(t)
+C\int_0^t\|P\|_{\infty}^{12}ds\right).
\ee

{\it Step 3.}
Similarly, recall Lemma \ref{Riesz} and $G=\kappa div U-P = \kappa F-P$

\be
\ba
r^2u & = \int_0^rs^{2}Fds \\
& \le\left(\int_0^r F^6s^{4}ds\right)^{\frac{1}{6}}
\left(\int_0^rs^{\frac{8}{5}}ds\right)^{\frac{5}{6}}\\
&\le Cr^{\frac{13}{6}}(\int_0^r F^6s^{4}ds)^{\frac{1}{6}}
\ea
\ee
Hence,
\be
|u|^2/r\le Cr^{-\frac{2}{3}}(\int_0^r F^6s^{4}ds)^{\frac{1}{3}}\le Cr^{-\frac{2}{3}}(\int_0^r G^6s^{4}ds)^{\frac{1}{3}} + Cr\|\rho\|_\infty^{6\gamma}.
\ee
 Also, we will use the following CKN inequality easily from Lemma \ref{CKN}
\be
\|s^{\frac{2}{3}}G\|_{L^6(0,r)}\le C\|s\nabla G\|_{L^2(0,r)}^{\frac{2}{3}}\|sG\|_{L^2(0,r)}^{\frac{1}{3}}
\ee
or equivalently,
\be
\int_0^r G^6s^{4}ds \le C\|\nabla G\|_{L^2(B(0,r))}^{4}\|G\|_{L^2(B(0,r))}^{2}
\ee
Indeed, in view of (\ref{CKN}), take $n=1, \gamma=\frac{2}{3},r=6,\alpha=\beta=1>\sigma=\frac{1}{2},p=q=2,
a=\frac{2}{3}$.

Now we are ready to give estimates for $\Psi_3$.
\be
\ba
\int_0^t\int_{0}^{y}r^{1-N}\frac{|u|^2}{r}\,dyd\tau &
=\int_0^t\int_{0}^r\frac{\rho |u|^2}{s}\,dsd\tau,\quad\mbox{$r\in (0,R)$}\\
&\le C\int_0^t(\int_0^r F^6s^{4}ds)^{\frac{1}{3}} \int_0^r\rho s^{-\frac{2}{3}} dsd\tau\\
&\le C\int_0^tr^{\frac{1}{3}}\|\rho\|_\infty(\int_0^r G^6s^{4}ds)^{\frac{1}{3}} +r^{\frac{1}{3}}\|\rho\|_\infty^{2\gamma+1}d\tau\\
&\le C\int_0^t\left(\|\rho\|_\infty\|G\|_{L^2(B(0,r))}^{\frac{2}{3}} \|G\|_{H^{1}(B(0,r))}^{\frac{4}{3}} + \|\rho\|_\infty^{2\gamma+1}\right)d\tau\\
& = C\int_0^t\left(\|\rho\|_\infty
\|G\|_{L^2(B(0,r))}^{\frac{2}{3}} \|G\|_{H^{1}(B(0,r))}^{\frac{4}{3}} +\|\rho\|_\infty^{2\gamma+1}\right) d\tau\\
&\le C\int_0^t\left(1+\|\rho\|_\infty
\|\nabla U\|_{L^2(B(0,r))}^2 + \|\rho\|_\infty\|\nabla U\|_{L^2}^{\frac{2}{3}} \|\nabla G\|_{L^2(B(0,r))}^{\frac{4}{3}} + \|\rho\|_\infty^{2\gamma+1}\right)d\tau\\
&\le C\int_0^t\left(1+\|\rho\|_{\infty}^{-1}\|\nabla G\|_{L^2(B(0,r))}^2 + \|\nabla U\|_{L^2(B(0,r))}^2(\|\rho\|_{\infty}^5+1) + \|\rho\|_{\infty}^{6\gamma+1}\right)d\tau\\
&\le C\int_0^t\left(1+\|\rho^{\frac{1}{2}}\dot{U}\|_{L^2(B(0,r))}^2 + C(\|\rho\|_\infty^5+1)\|\nabla U\|_{L^2}^2 + \|\rho\|_{\infty}^{6\gamma+1}\right)d\tau
\ea
\ee

The estimates of $\|\rho^{\frac{1}{2}}\dot{U}\|_{L^2}$ then follows similarly as (103) in Desjardins\cite{Des},  one concludes that
\be\la{psi-3}
\Psi_3\le 1\le\Psi_3^{-1}\le C\exp\left(C\int_0^t\chi(\Phi)\lambda(s)ds\right).
\ee

{\it Step 4.} We can rewrite (\ref{cru-1}) as
\be\la{rhoo}
\rho(t,y) = \mathcal{P}(t)\,\mathcal{U}(t,y)\exp\left(-\kappa^{-1}\int_0^tp(s,y)\,ds\right)
\ee
where
\be\la{u-1}
\mathcal{P}(t) = \frac{\rho(t,0)}{\rho_0(0)}\exp\left(\kappa^{-1}\int_0^tp(s,0)\,ds\right)
\ee
and
\be\la{u-2}
\mathcal{U}(t,y) =\rho_0(y)\Psi_1\Psi_3
\ee

On the other hand, it follows from (\ref{rhoo}) that
\be\la{p-1}
\ba
\frac{d}{dt}\exp\left(\frac{\ga}{\kappa}\int_0^tp(s,y)\,ds\right) &
= \frac{a\ga}{\kappa}\rho(t,y)^{\ga}\exp\left(\frac{\ga}{\kappa}\int_0^tp(s,y)\,ds\right)\\
& = \frac{a\ga}{\kappa}\left(\mathcal{P}(t)\,\mathcal{U}(t,y)\right)^{\ga},
\ea
\ee
which implies
\be\la{rhoo-2}
\exp\left(\frac{1}{\kappa}\int_0^tp(s,y)ds\right) =
\left(1+\frac{a\ga}{\kappa}\int_0^t(\mathcal{P}(s)\,\mathcal{U}(s,y))^{\ga}ds\right)^{1/\ga}.
\ee
Next, we are in a position to estimate $\mathcal{P}(t)$. First, observe that
\be\la{my}
\int_{0}^{M_0}\frac{dy}{\rho(t,y)}=\int_{0}^Rr^{N-1}dr = \frac{R^N}{N}.
\ee
In view of (\ref{rhoo}) and (\ref{rhoo-2}), we have
\be\la{rrr}
\rho(t,y) = \frac{\mathcal{P}(t)\,\mathcal{U}(t,y)}
{\left(1+\frac{a\ga}{\kappa}\int_0^t(\mathcal{P}(s)\,\mathcal{U}(s,y))^{\ga}ds\right)^{1/\ga}}.
\ee
Then, $\mathcal{P}(t)$ can be estimated as
\be\la{final}
\ba
&\frac{R^N}{N}\mathcal{P}(t)
= \int_{0}^{M_0}\frac{\mathcal{P}(t)}{\rho(t,y)}\,dy \\
&\quad = \int_{0}^{M_0}
\frac{\left(1+\frac{a\ga}{\kappa}\int_0^t(\mathcal{P}(s)\,\mathcal{U}(s,y))^{\ga}\,ds\right)^{1/\ga}}
{\mathcal{U}(t,y)}\,dy\\
&\quad \le C\int_{0}^{M_0}\frac{1}{\mathcal{U}(t,y)}\,dy \\
&\qquad+ C(\frac{a\ga}{\kappa})^{1/\ga}\left(\sup_{Q_{T,0}} \mathcal{U}(t,y)\right)
\left(\sup_{Q_{T,0}}\mathcal{U}^{-1}(t,y)\right)
\int_{0}^{M_0}\left(\int_0^t\mathcal{P}(s)^\ga ds\right)^{1/\ga}dy\\
&\quad\le CM_0\left(\sup_{Q_{T,0}}\mathcal{U}^{-1}(t,y)\right) \\
&\qquad + CM_0\left(\sup_{Q_{T,0}} \mathcal{U}(t,y)\right)\left(\sup_{Q_{T,0}}
\mathcal{U}^{-1}(t,y)\right)\left(\int_0^t\mathcal{P}(s)^\ga\,ds\right)^{1/\ga}.
\ea
\ee
Using (\ref{my}) and taking $\ga$-th power on both sides of (\ref{final}),
we have
\be\la{Gron}
\ba
\left(\frac{M_0}{E_0}\right)^{\frac{\ga}{\ga-1}}\mathcal{P}(t)^{\ga} & \le C\left(\sup_{Q_{T,0}}
\mathcal{U}^{-1}(t,y)\right)^{\ga} \\+\ & C\left(\sup_{Q_{T,0}}\mathcal{U}^{-1}(t,y)\right)^{\ga}\left(\sup_{Q_{T,0}}
\mathcal{U}(t,y)\right)^{\ga}\left(\int_0^t\mathcal{P}(s)^{\ga}ds\right).
\ea
\ee
Therefore, by Gronwall's inequality, we deduce from (\ref{Gron}) that
\be\la{Gron2}
\ba
\mathcal{P}(t) \le\ & C\left(\frac{E_0}{M_0}\right)^{\frac{1}{\ga-1}}
\left(\sup_{Q_{T,0}}\mathcal{U}^{-1}(t,y)\right) \\
& \cdot\exp \left\{CT\left(\sup_{Q_{T,0}}\mathcal{U}^{-1}(t,y)\right)^{\ga}
\left(\sup_{Q_{T,0}}\mathcal{U}(t,y)\right)^{\ga}\right\}.
\ea
\ee
Finally, recalling (\ref{rrr}), we have
\be\la{density}
\rho(t,y)\le \mathcal{P}(t)\left(\sup_{Q_{T,0}}\mathcal{U}(t,y)\right),
\ee
Plugging (\ref{psi-1}),(\ref{psi-3}) and (\ref{u-2}) into
(\ref{Gron2})-(\ref{density}),
thus finishes the proof of Lemma \ref{le-2}.
\end{proof}

Therefore, Theorem \ref{t2} can be done easily from the work of \cite{Des} and Lemma \ref{Riesz}.

 \bigskip
\section{Proof of Theorem 1.2}

The remaining part is dedicated to the proof of Theorem \ref{t3}.

{\it Proof.} Set $$v^i=\frac{1}{\rho^i}, U^i=u^i\frac{x}{r}.$$
 We prove Theorem \ref{t3} for the system
\be\label{sys-s}
\left\{
\ba
& v_t =(r^{N-1}u)_y,\\
& r^{1-N}u_t + p_y = \kappa\left(\frac{(r^{N-1}u)_y}{v}\right)_y
\ea
\right.
\ee
where $p=av^{-\gamma}$.

The two weak solutions enjoy the following regularity
\be
\left\{
\ba
& 0<\underline{\rho}\le\rho^i\le\overline{\rho},\\
& u^i_r,\frac{u^i}{r}\in L^2L^\infty.
\ea
\right.
\ee
This is equivalent to say
\be\la{equi-2}
\left\{
\ba
& 0<\underline{v}\le v^i\le\overline{v},\\
& r^{N-1}u^i_y,\frac{u^i}{r}\in L^2L^\infty.
\ea
\right.
\ee
Indeed,
\be
r^{N-1}u^i_y = r^{N-1}u^i_rr_y = v^iu^i_r\in L^2L^\infty.
\ee
Denote by
\be
\Lambda = v^1-v^2,\quad, \Theta = u^1-u^2.
\ee
We have
\be
|\Lambda|\le 2\overline{v}, \quad \Lambda(0,y) = \Theta(0,y)=0.
\ee
and
\be\la{lambda-1}
\Lambda_t = (r^{N-1}\Theta)_y,
\ee
\be\la{theta-1}
\ba
r^{1-N}\Theta_t + (p^1-p^2)_y &= \kappa(\frac{r^{N-1}u^1_y}{v^1}-\frac{r^{N-1}u^2_y}{v^2})_y\\
&=\kappa(\frac{(r^{n-1}\Theta)_y}{v^1}-\frac{\Lambda}
{v^1v^2}(r^{N-1}u^2_y))_y.
\ea
\ee

Multiplying (\ref{lambda-1}) by $\Lambda$ and integrating over $(0,y)$, we have
\be\la{lambda-2}
\frac{1}{2}\frac{d}{dt}\int|\Lambda|^2dy \le \epsilon\int|(r^{N-1}\Theta)_y|^2dy + C(\epsilon)\int|\Lambda|^2dy.
\ee
Similarly, multiplying (\ref{theta-1}) by $r^{N-1}\Theta$ and integrating over $(0,y)$, we also obtain
\be\la{theta-2}
\ba
&\frac{1}{2}\frac{d}{dt}\int|\Theta|^2dy + \frac{1}{v^1}\int|(r^{N-1}\Theta)_y|^2dy \\
&\quad\quad\quad\le \epsilon\int|(r^{N-1}\Theta)_y|^2dy + C(\epsilon)\int|\Lambda|^2dy +C(\underline{v},\overline{v})\|r^{N-1}u^2_y\|_{L^\infty}^2\int|\Lambda|^2dy.
\ea
\ee
where we have used Young's inequality and
\be
|p^1-p^2|=|(v^1)^{-\gamma}-(v^2)^{-\gamma}|\le C(\underline{v},\overline{v})|\Lambda|.
\ee
Recalling the fact (\ref{equi-2}) and (\ref{vip-p}) that
\be
(r^{N-1}u^2)_y = r^{N-1}u^2_y + (N-1)v\frac{u^2}{r}\in L^2L^\infty.
\ee
Collecting (\ref{lambda-2}-\ref{theta-2}) and choosing $\epsilon$ small enough, Gronwall's inequality immediately imply
\be
\Lambda = \Theta = 0.
\ee


{\bf Acknowledgment}. \  This research   is supported  by
President Fund of Academy of Mathematics Systems Science, CAS, No.2014-cjrwlzx-hxd and National Natural Science Foundation of China, No. 11471321 and 11371064.

\begin {thebibliography} {99}

\bibitem{Be}H. Beira da Veiga, Long time behavior for one-dimensional motion of a general barotropic viscous fluid. {\it Arch. Rational Mech. Anal.} {\bf 108}(1989), 141-160.

\bibitem{CKN} Caffarelli L, Kohn R, Nirenberg L,
First order interpolation inequalities with weights. {\it Compositio. Math.} {\bf 53} (1984), 259-275.

\bibitem{Des} B. Desjardins,  Regularity of weak solutions of the compressible isentropic navier-stokes equations. {\it Comm. Partial Diff Eqs.} {\bf 22} (1997), No.5, 977-1008.

\bibitem{F1} E. Feireisl,  A. Novotny, H. Petzeltov\'{a},  On the existence of globally defined weak solutions to the
Navier-Stokes equations. {\it J. Math. Fluid Mech.} {\bf 3}  (2001), 358-392.

\bibitem{F2} E. Feireisl,  Weak-strong uniqueness property for the full
Navier-Stokes-Fourier system. {\it Arch.
Rational Mech. Anal.} {\bf 204}  (2012), 683-706.

\bibitem{Ger} P. Germain, Weak-strong uniqueness for the isentropic compressible Navier-Stokes system.{\it  J. Math. Fluid Mech.} {\bf 13}(2011), No. 1, 137-146.

\bibitem{Hof1} D. Hoff,  Discontinuous solutions of the Navier-Stokes equations
for multidimensional flows of heat-conducting fluids. {\it Arch.
Rational Mech. Anal.}  {\bf 139} (1997), 303-354.

\bibitem{Hof2} D. Hoff,  Uniqueness of weak solutions of
    the Navier-Stokes equations of multidimensional, compressible flow.{\it SIAM J. Math. Anal.} {\bf 37}(2006), No. 6, 1742-1760.

\bibitem{Hof3} D. Hoff,  Dynamics of Singularity Surfaces
for Compressible, Viscous Flows
in Two Space Dimensions.{\it Comm. Pure. Appl Math.} VOL.{\bf LV}(2002), 1365-1407.

\bibitem{Hxd-1} Huang, X. D., Li, J., Xin, Z. P.:
Blowup criterion for viscous barotropic flows with vacuum states.
{\it Comm. Math. Phys.}  {\bf 301}(2011),  23-35.

\bibitem{Hxd-2} Huang, X. D., Li, J., Xin, Z. P.:
Serrin type criterion for the three-dimensional viscous compressible flows.  {\it SIAM J. Math. Anal.} {\bf 43}(2011),  1872-1886.

\bibitem{Hxd-3} Huang, X. D.,  Xin, Z. P.:
A Blow-Up Criterion for Classical Solutions to the Compressible
Navier-Stokes Equations,  {\it Sci. in China.}     {\bf 53}(3)(2010),  671-686.

\bibitem{Itaya}N. Itaya, On the Cauchy problem for the system
of fundamental equations describing the movement of compressible viscous fluids,{\it Kodai Math. Sem. Rep.} {\bf 23} (1971), 60-120.

\bibitem{JZ}Jiang. S, Zhang. P, On Spherically Symmetric Solutions of the Compressible Isentropic Navier¨CStokes Equations,{\it Comm. Math. Phys. .} {\bf 215} (2001), 559-581.

\bibitem{Ka-1}
  Vaigant, V. A.; Kazhikhov. A. V.
  On existence of global solutions to the two-dimensional Navier-Stokes equations for a compressible viscous fluid.
 {\it Sib. Math. J.}  {\bf 36} (1995), no.6, 1283-1316.

\bibitem{Ka-2} A.V. Kazhikhov, Stabilization of solutions of an initial-boundary-value problem for the equations
of motion of a barotropic viscous fluid. {\it Differ. Equ.} {\bf 15}(1979), 463-467.

\bibitem{Kim-1} Y. Cho,  H.  Kim,
On classical solutions of the compressible Navier-Stokes equations
with nonnegative initial densities.{ \it Manuscript Math. }{ \bf120} (2006), 91-129.

\bibitem{Kim-2} H.J, Choe, H. Kim, Global existence of the radially symmetric solutions
of the Navier-Stokes equations for the isentropic compressible fluids. {\it Math. Meth. Appl. Sci.}  {\bf 28}(2005), 1-18.

\bibitem{L1} P. L. Lions,  \emph{Mathematical topics in fluid
mechanics. Vol. {\bf 2}. Compressible models,}  Oxford
University Press, New York,   1998.

\bibitem{Mat-1}A. Matsumura, T. Nishida, The initial value problem
for the equations of motion of compressible and heat-conductive fluids, {\it Proc. Japan Acad. Ser. A Math. Sci.} {\bf 55} (1979), 337-342.

 \bibitem{Mat-2}A. Matsumura, T. Nishida, The initial value problem
for the equations of motion of viscous and heat-conductive gases, {\it J. Math. Kyoto Univ.} {\bf 20} (1980),67-104.

\bibitem{Mat-3}A. Matsumura, T. Nishida, The initial boundary value problems for the equations of motion of compressible
and heat-conductive fluids, {\it Comm. Math. Phys.} {\bf 89} (1983), 445-464.

\bibitem{Nash}J. Nash, Le probleme de Cauchy pour les equations
differentielles dn fluide general, {\it Bull. Soc. Math. France} {\bf 90} (1962), 487-497.

\bibitem{Salvi}R. Salvi, I. Straskraba, Global existence for viscous compressible
fluids and their behavior as $t\rightarrow\infty$. {\it J. Fac. Sci. Univ. Tokyo Sect. IA, Math.} {\bf 40} (1993), 17-51.

\bibitem{Solo}V.A. Solonnikov, Solvability of the initial boundary
value problem for the equation of a viscous compressible fluid, {\it J. Sov. Math.} {\bf 14} (1980), 1120-1133.

\bibitem{SZ}Sun, Y. Z.,  Wang, C.,  Zhang, Z. F.:
A Beale-Kato-Majda Blow-up criterion for the 3-D compressible
Navier-Stokes equations. {\it J. Math. Pures Appl.}, {\bf 95}(2011), 36-47

\bibitem{Tani}A. Tani, On the first initial-boundary value problem of compressible viscous fluid motion,
{\it Publ. Res. Inst. Math. Sci. Kyoto Univ.} {\bf 13} (1971), 193-253.

\bibitem{Valli-1}A. Valli, An existence theorem for compressible viscous fluids, {\it Ann. Mat. Pura Appl.} (IV) {\bf 130} (1982), 197-213;

\bibitem{Valli-2}A. Valli, Periodic and stationary solutions for compressible Navier-Stokes equations via a stability method,
{\it Ann. Scuola Norm. Sup. Pisa Cl. Sci.} {\bf 10} (1983), 607-647.

\bibitem{Xin} Z. P. Xin,
Blowup of smooth solutions to the compressible {N}avier-{S}tokes
equation with compact density. {\it Comm. Pure Appl. Math. }   {\bf 51} (1998), 229-240.

\bibitem{ZF}Zhang, T; Fang D. Compressible flows with a density-dependent viscosity coefficient.
SIAM J. Math. Analysis, {\bf 41} (2009), no.6,   2453-2488.

\end {thebibliography}

\medskip

\noindent
Xiangdi Huang\\
NCMIS, AMSS,   Chinese Academy of Sciences, Beijing\\
100190, People's Republic of China\\
e-mail: xdhuang@amss.ac.cn

\medskip

\end{document}